\newcommand \datum {Version of October 20, 2024}
\numberwithin{equation}{section}
\theoremstyle{plain}
 \newtheorem{theorem}{Theorem}[section]
 \newtheorem{lemma}[theorem]{Lemma}
\newtheorem{claim}[theorem]{Claim}
\theoremstyle{definition}
 \newtheorem{definition}[theorem]{Definition}
 \newtheorem{remark}[theorem]{Remark}
\theoremstyle{remark}
\newcommand \figwidthcoeff{0.95}
\newcommand \asys{\mathbf A}
\newcommand \Equ[1] {\textup{Equ}(#1)}
\newcommand \Quo[1] {\textup{Quo}(#1)}
\newcommand \Part[1] {\textup{Part}(#1)}
\newcommand \eq {\textup{eq}}
\newcommand \geneq {\overline{\textup{eq}}}
\newcommand \qu {\textup{qu}}
\newcommand \at {\textup{at}}
\newcommand \peq {\geneq\kern 1pt '}
\newcommand \pat {\textup{at}'}
\newcommand \pwedge {\wedge'}
\newcommand \pvee {\vee'}
\newcommand\textquotat[1]{`{`{#1}'}'} 
\newcommand \latop[1] {\mathbf 1_{#1}}
\newcommand \labot[1] {\mathbf 0_{#1}}
\newcommand \inv[1]  {#1^{-1}}
\newcommand \ialpha{\inv\alpha}
\newcommand \ibeta{\inv\beta}
\newcommand \igamma{\inv\gamma}
\newcommand \idelta{\inv\delta}
\newcommand\num[1]{\textup{blnum}(#1)}
\newcommand \deqref[2]{\overset{(\ref{#1},\ref{#2})}=}
\newcommand \shdeqref[2]{\kern-1pt \underset {\scriptscriptstyle{(\ref{#2})}} {\overset{ \scriptscriptstyle {\ref{#1})}  }=}\kern -1pt}
\renewcommand \phi{\varphi}
\newcommand \Nnul {{\mathbb N_0}}
\newcommand \Nplu {{\mathbb N^+}}
\newcommand{\tbf}{\textbf}
\newcommand{\set}[1]{\{#1\}}
\newcommand \red[1]{{\textcolor{red}{#1}\color{black}}}
\newcommand \blue[1]{{\textcolor{blue}{#1}}}
\begin{document}

\title[Four generators of an equivalence lattice]{Four generators of an equivalence lattice with consecutive block counts}

\author{G\'abor Cz\'edli}
\address{University of Szeged, Bolyai Institute. 
Szeged, Aradi v\'ertan\'uk tere 1, HUNGARY 6720, 
\href{http://www.math.u-szeged.hu/~czedli/}{http://www.math.u-szeged.hu/\textasciitilde{}czedli/} }
\email{czedli@math.u-szeged.hu}
\thanks{This research was supported by the National Research, Development and Innovation Fund of Hungary, under funding scheme K 138892. \hfill\red{\datum}}

\begin{abstract}
The \emph{block count} of an equivalence $\mu\in\Equ A$ is the number $\num\mu$ of blocks of (the partition corresponding to) $\mu$. We say that $X=\set{\mu_1,\mu_2,\mu_3,\mu_4}$ is a 
\emph{four-element generating set of $\Equ A$ with consecutive block counts} if
$X$ generates $\Equ A$ and $\num{\mu_{1+i}}=\num{\mu_1} +i$ for $i\in\set{1,2,3}$. We prove that if the number of elements of a finite set $A$ is six or at least eight, then  $\Equ A$ has a four-element generating set with consecutive block counts.
Also,  we present a historical remark on the connection between equivalence lattices and quasiorder lattices.
\end{abstract}

\dedicatory{Dedicated to my esteemed coauthors, Honorary Professors L\'aszl\'o Szab\'o on his seventy-fifth birthday and Lajos Klukovits on his eightieth birthday.}

\maketitle

\subsection*{Preface} This paper is probably self-contained for those who know the concept of a lattice as an algebraic structure. 
Our goal is two-fold. First, we present a historical remark on the connection between equivalence lattices and quasiorder lattices; see after the proof of Claim \ref{claim:mc95}. 
Second,  we prove a new theorem, Theorem \ref{thm:main}, which corresponds to the title of the paper.

\section{Introduction and a historical remark}\label{sect:intro}

We begin with some notations and well-known definitions. The set of \emph{equivalences} (in other words, \emph{equivalence relations}, that is, reflexive, symmetric, and transitive relations) of a set $A$ will be denoted by $\Equ A$. 
With intersections and the transitive hulls of unions acting as meets and joins, respectively, $\Equ A$ is a \emph{lattice}, the \emph{equivalence lattice} of (or over) $A$; the notation $\Equ A$ will stand for this lattice, too. By the canonical bijective correspondence between equivalences and partitions of a set, $\Equ A$ is isomorphic to the \emph{partition lattice} $\Part A$ of $A$, which consists of all partitions of $A$.  We will often consider equivalences as partitions. 
For $X\subseteq Y$, we say that $X$ is a \emph{proper} subset of $Y$ if $X\neq Y$. A \emph{sublattice} or a \emph{complete sublattice} of $\Equ A$ is a nonempty subset that is closed with respect to binary joins and meets or to arbitrary joins and meets, respectively. 
A subset $X$ of $\Equ A$ is a \emph{generating set} or a \emph{complete-generating set} of $\Equ A$ if there is no proper sublattice $Y$ or a proper complete sublattice $Y$ of $\Equ A$, respectively, such that $X\subseteq Y$. 
\emph{Quasiorders} are reflexive and symmetric relations. The quasiorders of a set $A$ form a lattice, the \emph{quasiorder lattice} $\Quo A$ of $A$. Note that $\Equ A$ is a complete sublattice of $\Quo A$. 

In the middle of the seventies, Henrik Strietz proved that for any finite set $A$ with $|A|\geq 3$, $\Equ A$ is \emph{four-generated},  that is, it has a four-element generating set; see Strietz \cite{strietz75}--\cite{strietz77}.  
Since Strietz's work, more than a dozen papers have been devoted to four-element  (or small) generating sets of equivalence lattices and quasiorder lattices; for details, see the \textquotat{References} section here and the bibliographic sections and the survey parts of the papers listed there. Hence, instead of giving another survey, we focus only on the connection between the small generating sets of $\Equ A$ and those of $\Quo A$. In one direction, 
we recall  an important statement from \cite[page 61]{kulin}; see also Lemma 2.1 of \cite{czedlikulin}, where the original lemma is recalled. 

\begin{lemma}[Kulin's Lemma]\label{lemma:kulin}
If $A$ is an arbitrary set with at least three elements and $S$ is a complete sublattice of $\Quo A$ such that $\Equ A$ is a proper subset of $S$, then $S=\Quo A$.
\end{lemma}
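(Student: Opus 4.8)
The plan is to exploit that $S$ properly contains $\Equ A$, so $S$ contains a quasiorder $\rho$ that fails to be symmetric; hence there are $a_0,b_0\in A$ with $(a_0,b_0)\in\rho$ but $(b_0,a_0)\notin\rho$. Writing $\Delta$ for the diagonal (the least quasiorder) and, for distinct $x,y$, $\theta_{x,y}:=\Delta\cup\{(x,y)\}$ for the atoms of $\Quo A$, my first step would be to isolate one such atom inside $S$. Taking $\varepsilon$ to be the equivalence whose only non-singleton block is $\{a_0,b_0\}$, so that $\varepsilon=\Delta\cup\{(a_0,b_0),(b_0,a_0)\}\in\Equ A\subseteq S$, the meet $\rho\wedge\varepsilon=\rho\cap\varepsilon$ keeps $(a_0,b_0)$, discards $(b_0,a_0)$, and can contain no other off-diagonal pair (the only ones in $\varepsilon$ are $(a_0,b_0)$ and $(b_0,a_0)$). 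Thus $\theta_{a_0,b_0}=\rho\wedge\varepsilon\in S$, since $S$ is closed under meets.

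The heart of the argument is a \emph{transport step}: from the single atom $\theta_{a_0,b_0}\in S$, every atom $\theta_{a,b}$ (with $a\ne b$) can be produced inside $S$ using only joins and meets with suitable equivalences. I would record two basic moves: (i) if $\theta_{x,y}\in S$ and $z\ne x$, then $\theta_{x,z}\in S$; and (ii) if $\theta_{x,y}\in S$ and $w\ne y$, then $\theta_{w,y}\in S$. For (i), assuming $z\notin\{x,y\}$ (the case $z=y$ being trivial), join $\theta_{x,y}$ with the equivalence whose block is $\{y,z\}$: the join, being the transitive closure of the union, is $\Delta\cup\{(x,y),(x,z),(y,z),(z,y)\}$, and meeting this with the equivalence whose block is $\{x,z\}$ leaves exactly $\theta_{x,z}$, because $(z,x)$ never enters the transitive closure. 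Move (ii) is symmetric, joining with the block $\{x,w\}$ and meeting with the block $\{w,y\}$. Since each move resets one coordinate to an arbitrary value distinct from the other, and $|A|\ge 3$ gives room to route around any coordinate collision (passing through an auxiliary third element when needed), any ordered pair of distinct elements can be carried to any other in at most three moves; hence every atom lies in $S$.

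Finally I would assemble everything. The lattice $\Quo A$ is atomistic: every quasiorder satisfies $\sigma=\bigvee\{\theta_{a,b}:(a,b)\in\sigma,\ a\ne b\}$, with $\Delta$ arising as the empty join. Since $S$ now contains every atom and is, crucially, closed under arbitrary joins, we conclude $\sigma\in S$ for every $\sigma\in\Quo A$, that is, $S=\Quo A$. I expect the main obstacle to be the transport step: verifying that the successive join-then-meet operations with two-element-block equivalences yield \emph{exactly} the desired single-pair atom, with no extra pairs slipping in through the transitive closure, and carrying out the (easy but necessary) bookkeeping with $|A|\ge 3$ to avoid degenerate overlaps of the coordinates. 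The appeal to completeness is what makes the assembly valid when $A$ is infinite: closure under merely finite joins would not suffice to recover an arbitrary $\sigma$ there.
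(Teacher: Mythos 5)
Your proof is correct. Note that the paper itself does not prove Lemma \ref{lemma:kulin}; it only recalls it from Kulin \cite{kulin} (see also Lemma 2.1 of \cite{czedlikulin}), so there is no in-paper argument to compare against. Your route --- extracting the directed atom $\qu(a_0,b_0)$ as the meet of a non-symmetric member of $S$ with the equivalence $\at(a_0,b_0)$, transporting atoms by join-then-meet with two-element-block equivalences (your transitive-closure computations are right, and $|A|\geq 3$ suffices for the routing), and concluding from atomisticity of $\Quo A$ together with closure under arbitrary joins --- is the standard argument for this lemma and is complete as written.
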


\begin{figure}[ht] 
\centerline{ \includegraphics[width=\figwidthcoeff\textwidth]{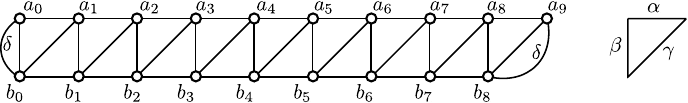}} \caption{Z\'adori's construction for $|A|=19$}\label{figa}
\end{figure}

In other directions, neither any connection nor the forthcoming Claim \ref{claim:mc95} has been published before. To present such a connection of historical value, let $|A|=19$; the case of $|A|=2k+1 \geq 5$ would be similar. The construction visualized by Figure \ref{figa} is taken from  Z\'adori \cite{zadori}.

\begin{claim}[\cite{zadori}; exemplifying the odd case of Z\'adori's construction]\label{claim:Z19}
If $|A|=19$, then $\Equ A$ has a four-element generating set.
\end{claim}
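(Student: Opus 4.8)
The plan is to reconstruct Zádori's construction from Figure~\ref{figa} explicitly for $|A|=19$ and verify that the four displayed equivalences generate $\Equ A$. First I would fix a labelling $A=\set{a_0,a_1,\dots,a_{18}}$ matching the figure, and read off the four equivalences $\mu_1,\mu_2,\mu_3,\mu_4$ from the drawing: typically in such pictures two of the equivalences are ``horizontal'' and ``vertical'' pairings that lay down a grid-like skeleton of blocks, while the remaining two are small equivalences (often atoms or near-atoms) whose single non-trivial blocks act as ``movable'' connectors threading through the grid. I would write each $\mu_i$ as an explicit partition so that every block is named.

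The core of the argument is to show that the sublattice $\langle\mu_1,\mu_2,\mu_3,\mu_4\rangle$ they generate is all of $\Equ A$. The standard strategy, which I would follow, is to prove that every \emph{atom} of $\Equ A$ lies in the generated sublattice, since the atoms (the equivalences whose only non-singleton block is a single pair $\set{a_i,a_j}$) join-generate the whole lattice $\Equ A$. To get one atom at a time, I would take suitable meets and joins of the $\mu_i$: intersecting a ``grid'' equivalence with the transitive hull of a union involving a connector typically isolates a single pair, and then one ``walks'' the connector along the grid using repeated join-then-meet operations to reach every adjacent pair in turn. Concretely, I expect to establish a chain of atoms $\alpha_k=\set{a_k,a_{k+1}}$-type pairs and show each is obtained from the previous one by a fixed lattice term in the generators, so that by induction all the edges of a spanning structure on $A$ are recovered; once the generated sublattice contains a connected family of atoms spanning $A$, joining them yields every equivalence.

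The main obstacle I anticipate is the bookkeeping of these meet/join computations in $\Equ A$: transitive hulls of unions can unexpectedly merge blocks, so each claimed equality ``$\mu_i\wedge(\mu_j\vee\mu_k)=\text{(single atom)}$'' must be checked by verifying both that the intended pair is collapsed and that \emph{no} spurious extra pair survives the meet. This is exactly where the geometry of Figure~\ref{figa} is doing the work: the construction is designed so that the connectors cross the grid in a way that, at each step, exactly one new pair is pinned down and all others are killed by the intersection. I would lean on that design, treating the figure as a certificate, and verify the few generic steps of the induction rather than all nineteen-point-specific computations; the odd size $19=2\cdot 9+1$ is handled uniformly, with the single ``extra'' point attached to the grid by one of the connectors. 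Since this is an exemplifying instance of a construction already published in Zádori~\cite{zadori}, I would ultimately cite \cite{zadori} for the full verification and present here only the explicit partitions together with the inductive ``edge-walking'' lemma that makes the generation transparent.
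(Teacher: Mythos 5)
Your overall strategy --- recover, inside the generated sublattice, the atoms $\at(x,y)$ attached to the edges of the figure and then invoke a lemma on atoms --- is the same as the paper's, but two steps of your plan would fail as written. First, the concluding reduction is wrong: a connected (even spanning) family of atoms does not join-generate $\Equ A$. Already for $|A|=3$ the sublattice generated by $\at(a_0,a_1)$ and $\at(a_1,a_2)$ is $\set{\labot A,\at(a_0,a_1),\at(a_1,a_2),\latop A}$ and misses $\at(a_0,a_2)$; joins of a spanning family of atoms only produce the equivalences generated by subsets of those particular edges. What the paper actually uses is Lemma~\ref{lemma:circle}: the atoms along a Hamiltonian \emph{cycle} lattice-generate $\Equ A$, and meets are essential there. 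So you must either produce \emph{all} $\binom{19}{2}$ atoms, or produce the atoms of a spanning cycle and cite that lemma; ``joining them'' is not enough.

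Second, the mechanism you describe for harvesting atoms --- walking a single connector along the grid so that ``at each step exactly one new pair is pinned down'' --- is not how Z\'adori's construction works, and a one-directional walk cannot deliver atoms. Starting from the seed $\rho_0=\beta\wedge\delta=\at(a_0,b_0)$, the terms $(\rho_i\vee\gamma)\wedge\alpha$, $(\rho'_i\vee\beta)\wedge\gamma$, and so on produce an \emph{increasing} sequence of equivalences whose non-singleton block keeps growing as the walk moves right; meeting such an element with a generator does not isolate a single new pair, because everything accumulated so far survives the meet. The point of the construction (and the reason $\delta$ has \emph{two} curves) is that one builds three right-going and three left-going increasing sequences from the two ends of the figure and obtains each atom as the meet of a right-going member with the matching left-going member, for example $\at(a_i,b_i)=\rho_i\wedge\lambda''_{8-i}$. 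Without this two-sided crossing idea the induction you propose has no way to terminate in atoms, so the gap is substantive rather than mere bookkeeping.
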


For later reference, we present Z\'adori's proof and his generating set.

\begin{proof}
For $p,q\in A$, the smallest equivalence collapsing $p$ and $q$ is an atom in $\Equ A$; we denote it by $\at(p,q)$. So $(x,y)\in\at(p,q)$ if and only if $x=y$ or $\set{x,y}=\set{p,q}$. 
Denote the elements of $A$ as follows:
$A=\{a_0,a_1,\dots,a_9$, $b_0,b_1,\dots,b_8\}$; see Figure \ref{figa}. 
The figure defines a subset $X:=\{\alpha,\beta,\gamma,\delta\}$ of the equivalence lattice $\Equ A$ as follows. Assume that the horizontal edges, the vertical edges, and the slanted straight edges of the graph are labeled by $\alpha$, $\beta$, and $\gamma$, respectively. To avoid a crowded figure, these labels are not indicated in the figure, but the triangle on the right reminds us of this convention. There are also two $\delta$-labeled edges, which are drawn as curves. For $\epsilon\in X$,  the figure defines $\epsilon$ as follows; walks of length zero are allowed.
\begin{equation}
\epsilon:=\{(x,y)\in A^2: \text{we can walk from }x\text{ to }y\text{ along }\epsilon\text{-colored edges}\}.
\label{eq:fgdFnwQvl}
\end{equation}
For example, $\set{b_1,a_2}$ is a block of $\gamma$ and $\set{b_0,\dots,b_8}$ is a block of $\alpha$. Let $S$ be the sublattice generated by $X$. In Figure \ref{figb}, where $A$ is drawn three times, some equivalences are given by their non-singleton blocks. The meanings of these blocks, with different geometric orientations, line styles, and colors, are defined on the right of the figure. For example, $\rho_0=\at(a_0,b_0)$ and $\lambda'_1=\at(a_9,a_8)\vee \at(a_8,a_7)\vee \at(b_8,b_7)$. 
We can easily show that, in this order, 
$\rho_0$, $\rho'_0$,  $\rho''_0$, $\rho_1$,   $\rho'_1$, $\rho''_1$, 
 $\rho_2$,   $\rho'_2$, $\rho''_2$,  $\rho_3$,   $\rho'_3$, $\rho''_3$, $\rho_4$, \dots 
belong to $S$, since each of them is expressible from the generators and the earlier ones. 
Indeed, $\rho_0=\beta\wedge\delta$ and, for $i=0,1,2,\dots$,  we have that 
$\rho'_i=(\rho_i\vee \gamma)\wedge \alpha$, $\rho''_i=(\rho'_i\vee \beta)\wedge \gamma$, and
 $\rho_{i+1}=\bigl(((\rho''_i \vee\beta)\wedge\alpha)\vee \rho''_i\bigr)\wedge\beta$.
The increasing sequences $(\rho_0, \rho_1,\rho_2,\dots)$,  $(\rho'_0, \rho'_1,\rho'_2,\dots)$, and $(\rho''_0, \rho''_1,\rho''_2,\dots)$ are \emph{right-going} in the sense that when the subscript increases by $1$, the subscripted equivalence obtains a new \textquotat{edge} on the right of the earlier edges. By interchanging the role of $\beta$ and $\gamma$, we obtain three increasing \textquotat{left-going} sequences  $(\lambda_0, \lambda_1,\lambda_2,\dots)$,  $(\lambda'_0, \lambda'_1,\lambda'_2,\dots)$, and $(\lambda''_0, \lambda''_1,\lambda''_2,\dots)$. 
Where a right-going sequence \textquotat{reaches} the appropriate left-going one, 
the meet of the two sequences yields an atom of $\Equ A$. Namely,  
for $i\in\set{0,1,\dots, 8}$, $\at(a_i,b_i)=\rho_i\wedge \lambda''_{8-i}\in S$,
$\at(a_ {i+1},b_i)=\rho''_i\wedge \lambda_{8-i}\in S$, and $\at(a_i,a_{i+1})  =\rho'_i\wedge \lambda'_{8-i}\in S$. Furthermore, for $i\in\set{0,\dots,7}$,
$\at(b_{i},b_{i+1})=\bigl(\at(a_{i+1},b_i)\vee \at(a_{i+1},b_{i+1})\bigr)\wedge\alpha\in S$. Hence, for every edge $(x,y)$ of the graph, $\at(x,y)\in S$. 
Therefore, the following lemma implies easily that $X$ generates $\Equ A$. 
\end{proof}

\begin{figure}[ht] 
\centerline{ \includegraphics[width=\figwidthcoeff\textwidth]{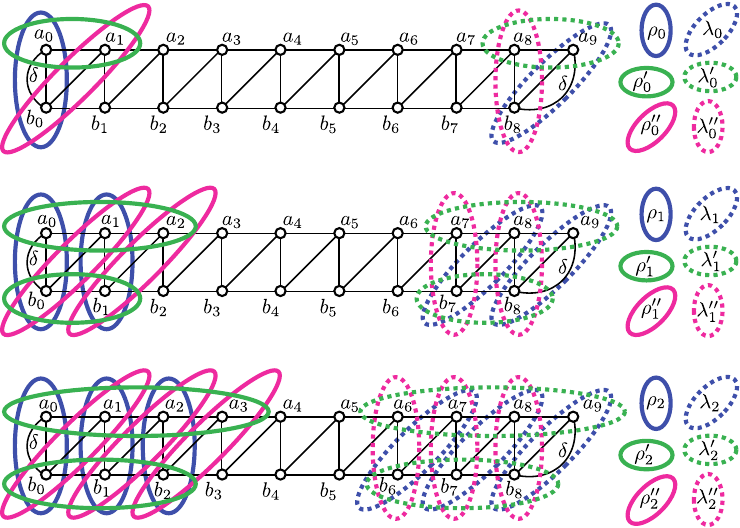}} \caption{Right-going and left-going sequences}\label{figb}
\end{figure}

\begin{lemma}\label{lemma:circle} 
If $3\leq n\in\Nplu=\set{1,2,3,\dots}$,  $A=\set{a_0,a_1,\dots,a_{n-1}}$, and $|A|=n$, then $\{\at(a_{i-1},a_i): i\in\set{1,\dots, n-1}\}\cup\set{\at(a_{n-1},a_0)}$ generates $\Equ A$. 
\end{lemma}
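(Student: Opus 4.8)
The plan is to prove that the sublattice $S\subseteq\Equ A$ generated by the $n$ given atoms equals $\Equ A$. Since $\Equ A$ is atomistic --- every equivalence is the join of the atoms $\at(x,y)$ lying below it --- it suffices to show that each atom $\at(a_i,a_j)$ with $i\neq j$ belongs to $S$. Write $g_i:=\at(a_i,a_{i+1})$ for $i\in\set{0,1,\dots,n-2}$ and $g_{n-1}:=\at(a_{n-1},a_0)$, so that the generating set is exactly $\set{g_0,\dots,g_{n-1}}$; I picture these as the edges of a cycle on the vertices $a_0,\dots,a_{n-1}$. For $n=3$ the three generators are already all three atoms, so $S=\Equ A$ is immediate; thus I may assume $n\geq 4$.

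The one computational tool I would use repeatedly is a \emph{meet trick}: for pairwise distinct $x,y,z$ the join $\at(x,y)\vee\at(y,z)$ has the single nontrivial block $\set{x,y,z}$, so if $\psi\in S$ has a block containing $x$ and $z$ but not $y$, then $(\at(x,y)\vee\at(y,z))\wedge\psi=\at(x,z)$, whence $\at(x,z)\in S$. The separating equivalences $\psi$ will always be \emph{arc equivalences} $g_s\vee g_{s+1}\vee\dots\vee g_t$, i.e.\ joins of consecutive generators; such a join lies in $S$ and its unique nontrivial block consists of the consecutive vertices that the arc spans.

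First I would generate all chords issuing from $a_0$, proving $\at(a_0,a_k)\in S$ for $k=1,\dots,n-1$ by induction on $k$. The base $k=1$ is just $g_0$. For the step, given $\at(a_0,a_{k-1})\in S$ with $2\leq k\leq n-1$, I apply the meet trick with $x=a_0$, $y=a_{k-1}$, $z=a_k$ and with $\psi=\theta_k:=g_k\vee\dots\vee g_{n-1}$, whose block $\set{a_k,\dots,a_{n-1},a_0}$ contains $a_0$ and $a_k$ but not $a_{k-1}$; this gives $\at(a_0,a_k)=(\at(a_0,a_{k-1})\vee g_{k-1})\wedge\theta_k\in S$. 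Once every chord through $a_0$ is available, an arbitrary atom $\at(a_i,a_j)$ with $1\leq i<j\leq n-1$ is obtained by a final application with $x=a_i$, $y=a_0$, $z=a_j$: taking $\phi:=g_i\vee\dots\vee g_{j-1}$, whose block $\set{a_i,\dots,a_j}$ contains $a_i,a_j$ but not $a_0$, we get $\at(a_i,a_j)=(\at(a_0,a_i)\vee\at(a_0,a_j))\wedge\phi\in S$. This exhausts all atoms, so $S=\Equ A$, as desired.

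The step deserving the most care --- and essentially the only place where the cyclic structure enters --- is checking, at each invocation, that the chosen arc really omits the middle vertex: $a_{k-1}\notin\set{a_0,a_k,\dots,a_{n-1}}$ in the chord induction, and $a_0\notin\set{a_i,\dots,a_j}$ for the general atom. Both follow from the index ranges ($1\leq k-1<k$ and $1\leq i$), but I would verify the boundary cases $k=2$ and $k=n-1$ (where $\theta_k$ degenerates to a single generator) explicitly. Everything beyond these separation checks is routine computation with meets and joins of partitions.
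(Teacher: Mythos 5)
Your proof is correct and complete: the meet trick is verified accurately, the arc equivalences $g_s\vee\dots\vee g_t$ do have the stated single nontrivial block, and the two separation checks ($a_{k-1}$ outside $\set{a_0,a_k,\dots,a_{n-1}}$, and $a_0$ outside $\set{a_i,\dots,a_j}$) are exactly the points that need care. Note that the paper itself offers no proof of this lemma --- it merely calls it easy and cites earlier papers --- so there is nothing to compare against; your argument (first all chords through $a_0$ by induction along the cycle, then arbitrary atoms, then atomisticity of the finite $\Equ A$) is the standard one found in those references and fully fills the gap.
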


In some form, this easy lemma occurs in several papers; see, e.g., \cite[Lemma 2.2]{czgDEBRauth} and   \cite[Lemma 2.5]{czedlioluoch}.

In 1995, the author visited Ivan Chajda at Palack\'y University in Olomouc.  
The research plan looked easy: by orienting the edges of the graph in Figure \ref{figa} in some way, we should find a small generating set of $\Quo A$. Our first construction was soon developed into a more sophisticated one, and so the first construction does not occur \cite{chajdaczedli}. However, we need the first 
construction\footnote{Its exact details have been lost but the idea of Claim \ref{claim:mc95} is the same.} here 
even though \cite{chajdaczedli} contains a stronger result and we have an even stronger one nowadays.

\begin{figure}[ht] 
\centerline{ \includegraphics[width=\figwidthcoeff\textwidth]{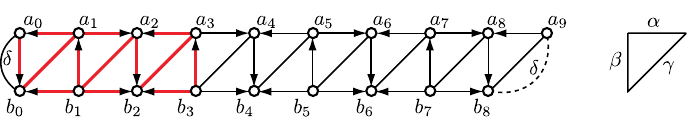}} \caption{Generating a quasiorder lattice}\label{figc}
\end{figure}

 Let $A=\set{a_0,a_1,\dots,a_9, b_0,b_1,\dots,b_8}$ be the 19-element set drawn in Figure \ref{figc}, which is quite similar to Figure \ref{figa}. Some edges are directed by arrowheads, some others are not. 
The figure defines a set $Y_0=\set{\alpha,\beta,\gamma,\delta}$ of quasiorders of $A$ by \eqref{eq:fgdFnwQvl} with the only modification that we cannot walk along a directed edge in the opposite direction. Along an undirected edge, we can walk in both directions. At present, it makes no difference whether an edge is red and thick or not.  For example, $(a_3,a_2), (a_3,a_4)\in\alpha$,  $(a_3,b_2), (b_2,a_3)\in\gamma$, but $(b_4,a_4)\notin\beta$ and $(a_2,a_3),(a_3,a_7)\notin\alpha$.
 For $\epsilon\in Y_0$, denote $\inv\epsilon=\set{(y,x): (x,y)\in\epsilon}\in\Quo A$ the \emph{inverse} of $\epsilon$. Note that $\gamma$ and $\delta$ are equivalences, and so $\igamma=\gamma$ and $\idelta=\delta$. Let  $Y:=Y_0\cup\set{\inv \epsilon: \epsilon \in Y_0}=\set{\alpha, \ialpha, \beta, \ibeta, \gamma,\delta}$.

\begin{claim}\label{claim:mc95} The six-element set $Y$ generates $\Quo A$.
\end{claim}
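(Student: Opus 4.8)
The plan is to mimic the strategy of Z\'adori's proof of Claim \ref{claim:Z19}, but now working inside the richer lattice $\Quo A$ where the six generators in $Y$ give us both directions of $\alpha$ and $\beta$ as well as the two equivalences $\gamma$ and $\delta$. The key observation is that once we can produce, inside the sublattice $S$ generated by $Y$, every \emph{atom} of $\Equ A$ of the form $\at(x,y)$ for each edge $(x,y)$ of the underlying graph, then Lemma \ref{lemma:circle} immediately yields $\Equ A\subseteq S$; since $\Equ A$ is a proper subset of $\Quo A$ (the directed edges force $S$ to contain non-symmetric quasiorders such as a directed atom $\at'(p,q)=\{(x,y):x=y\text{ or }(x,y)=(p,q)\}$ that is not in $\Equ A$), Kulin's Lemma (Lemma \ref{lemma:kulin}) forces $S=\Quo A$. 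Thus the whole problem reduces to two things: first, recovering all of $\Equ A$, and second, checking that $S$ strictly exceeds $\Equ A$.

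First I would recover $\Equ A$. The natural approach is to run Z\'adori's right-going and left-going recursions verbatim, but with each symmetric generator $\alpha,\beta$ replaced where necessary by the join $\alpha\vee\ialpha$ or $\beta\vee\ibeta$, which are genuine equivalences lying in $S$ and which collapse exactly the same pairs that the undirected $\alpha,\beta$ collapsed in Figure \ref{figa}. Concretely, I would define $\rho_0:=(\beta\vee\ibeta)\wedge\delta$ and then iterate the same formulas for $\rho'_i,\rho''_i,\rho_{i+1}$ and their left-going mirror images, using $\alpha\vee\ialpha$ and $\beta\vee\ibeta$ in place of $\alpha$ and $\beta$; because $\gamma$ and $\delta$ are already equivalences, the slanted and curved edges behave exactly as before. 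The meets where right-going and left-going sequences ``reach'' each other then deliver $\at(x,y)\in S$ for every edge $(x,y)$, and the extra relation for the $\at(b_i,b_{i+1})$ closes the argument. Invoking Lemma \ref{lemma:circle} on the cycle of atoms gives $\Equ A\subseteq S$.

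Second, I would exhibit an element of $S\setminus\Equ A$ to guarantee that the containment $\Equ A\subseteq S$ is proper. The cleanest candidate is a directed atom obtained from a single arrow of the graph: using $\alpha$ (not $\alpha\vee\ialpha$) together with already-constructed equivalences, one isolates a quasiorder that contains an ordered pair $(p,q)$ but not $(q,p)$, hence is non-symmetric and so lies outside $\Equ A$. I would verify this by taking an appropriate meet of $\alpha$ with a join of small atoms of $\Equ A$ (all of which are now known to be in $S$), so that only one directed edge survives. With one such asymmetric quasiorder in hand, $S$ is a complete sublattice of $\Quo A$ properly containing $\Equ A$, and Kulin's Lemma finishes the proof: $S=\Quo A$.

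The main obstacle I anticipate is the second step rather than the first: in Z\'adori's construction the recursions are built from meets and joins of relations that happen to be symmetric at every intermediate stage, so I must check carefully that replacing $\alpha,\beta$ by $\alpha\vee\ialpha,\beta\vee\ibeta$ does not accidentally collapse \emph{more} than intended (i.e.\ that the intermediate $\rho$'s and $\lambda$'s remain the \emph{same} equivalences as in Figure \ref{figb}, not coarser ones). Once the atoms are correctly reproduced, the passage to an asymmetric element is a matter of choosing the right meet so that exactly one arrow, and no pair in the reverse direction, remains; the delicate point is ensuring that the transitive closure inherent in forming $\alpha$ does not drag in unwanted reverse pairs. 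After that, the appeal to Lemma \ref{lemma:kulin} is immediate and requires no further computation.
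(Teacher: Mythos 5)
Your proposal is correct, but it takes a genuinely different route from the paper's. The paper never passes through $\Equ A$ at all: it works with the directed atoms $\qu(x,y)$ from the very first step, obtains $\qu(x,y)$ and $\qu(y,x)$ for \emph{every} edge by a single right-going recursion \eqref{eq:rCpvZnmG2}--\eqref{eq:rCpvZnmG14} (using that $S$ is closed under $\mu\mapsto\inv\mu$ because $Y$ is closed under that automorphism), and then finishes with the quasiorder counterpart of Lemma \ref{lemma:circle} rather than with Kulin's Lemma. Your route --- symmetrize to $\alpha\vee\ialpha$ and $\beta\vee\ibeta$, rerun Z\'adori's recursion to get $\Equ A\subseteq S$, exhibit a directed atom such as $\alpha\wedge\at(a_3,a_2)$ to make the inclusion proper, and invoke Lemma \ref{lemma:kulin} --- does work: since $\Equ A$ is a complete sublattice of $\Quo A$ and (granting what the paper clearly intends) the undirected graphs underlying Figures \ref{figa} and \ref{figc} coincide, the intermediate $\rho$'s and $\lambda$'s are \emph{literally} the same equivalences as in Figure \ref{figb}, so the coarsening you worry about cannot occur; and for a finite base set the generated sublattice is automatically a complete sublattice containing $\labot A$ and $\latop A$, so Kulin's Lemma applies. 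In fact the paper itself uses exactly your strategy at the end of Section \ref{sect:intro} to obtain the stronger four-generator result for $\Quo A$. What your argument sacrifices is precisely the historical point the author is making with Claim \ref{claim:mc95}: the paper's proof needs neither the left-going sequences nor Kulin's Lemma, which is why it can dispense with the right end of the figure and survive the passage to infinite base sets, whereas your argument (like Z\'adori's) is confined to the finite case.
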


\begin{proof}[Outline of the proof] For $x,y\in A$, $\qu(x,y)$ denotes the smallest quasiorder containing $(x,y)$. Let $S$ stand for the sublattice generated by $Y$. 
Since $f\colon \Quo A\to \Quo A$ defined by $\mu\mapsto\inv\mu$ is an automorphism of $\Quo A$ and $Y$ is $f$-closed, $S$ is also closed with respect to forming inverses. In particular, whenever $\qu(x,y)$ is in $S$, then so is $\qu(y,x)$; this fact will be used without further explanation. Let us compute; each containment \textquotat{$\in S$} below follows from the earlier ones and $Y\subseteq S$:
\begin{align}
\qu(a_0,b_0) &=\beta\wedge \delta \in S, 
\label{eq:rCpvZnmG2}\\
\qu(a_1,b_0) &= (\alpha\vee \qu(a_0,b_0)) \wedge \gamma\in S ,
\text{ by \eqref{eq:rCpvZnmG2},}       
\label{eq:rCpvZnmG3}\\
\qu(a_1,a_0)&=\alpha\wedge(\qu(a_1,b_0)\vee \qu(b_0,a_0))\in S \text{ by \eqref{eq:rCpvZnmG3} and \eqref{eq:rCpvZnmG2}},   
\label{eq:rCpvZnmG4}\\
\qu(b_1,a_1)&=(\alpha\vee \qu(b_0,a_1))\wedge \beta\in S  
\text{ by \eqref{eq:rCpvZnmG3}},    
\label{eq:rCpvZnmG5}\\
\qu(b_1,b_0)&=\alpha\wedge(\qu(b_1,a_1)\vee \qu(a_1,b_0) )\in S 
\text{ by \eqref{eq:rCpvZnmG5} and \eqref{eq:rCpvZnmG3}},       \label{eq:rCpvZnmG6}\\
\qu(b_1,a_2)&= \gamma \wedge(\qu(b_1,a_1)\vee \alpha )\in S 
\text{ by  \eqref{eq:rCpvZnmG5}},       
\label{eq:rCpvZnmG7}\\
\qu(a_1,a_2)&=\alpha\wedge(\qu(a_1, b_1)\vee \qu(b_1,a_2) )\in S 
\text{ by \eqref{eq:rCpvZnmG5} and \eqref{eq:rCpvZnmG7}},     \label{eq:rCpvZnmG8}\\
\qu(a_2,b_2)&= \beta\wedge(\qu(a_2, b_1)\vee \alpha )\in S 
\text{ by \eqref{eq:rCpvZnmG7}},     
\label{eq:rCpvZnmG9}\\
\qu(b_1,b_2)&= \alpha \wedge(\qu(b_1, a_2)\vee \qu(a_2,b_2))\in S 
\text{ by \eqref{eq:rCpvZnmG7} and \eqref{eq:rCpvZnmG9}},     
\label{eq:rCpvZnmG10}\\
\qu(a_3,b_2)&= \gamma\wedge(\alpha \vee \qu(a_2,b_2) )\in S 
\text{ by \eqref{eq:rCpvZnmG9}},     
\label{eq:rCpvZnmG11}\\
\qu(a_3,a_2)&=  \alpha\wedge(\qu(a_3, b_2)\vee \qu(b_2,a_2) )\in S 
\text{ by \eqref{eq:rCpvZnmG11} and \eqref{eq:rCpvZnmG9}},     \label{eq:rCpvZnmG12}\\
\qu(b_3,a_3)&= \beta \wedge (\alpha\vee \qu(b_2,a_3) )\in S 
\text{ by \eqref{eq:rCpvZnmG11}},     
\label{eq:rCpvZnmG13}\\
\qu(b_3,b_2)&= \alpha\wedge(\qu(b_3, a_3)\vee \qu(a_3,b_2) )\in S 
\text{ by \eqref{eq:rCpvZnmG13} and \eqref{eq:rCpvZnmG11}},     \label{eq:rCpvZnmG14}
\end{align}
and so on. Computations \eqref{eq:rCpvZnmG2}--\eqref{eq:rCpvZnmG14} and the fact that $S$ is closed with respect to forming inverses show that for each thick and red edge $(x,y)$ of the graph, $\qu(x,y)$ and $\qu(y,x)$ are in $S$. The figure and \eqref{eq:rCpvZnmG2}--\eqref{eq:rCpvZnmG14} also show how we can proceed further to the right. Hence,  $\qu(x,y)$ and $\qu(y,x)$ are in $S$ for every edge $(x,y)$ of the graph. Thus, the straightforward counterpart of Lemma  \ref{lemma:circle}  for quasiorder lattices completes the proof of Claim \ref{claim:mc95}.
\end{proof}

In the proof above, $\delta$ was needed only in the first step, \eqref{eq:rCpvZnmG2}. 
This step and the whole proof still work if we omit the dashed curve in Figure \ref{figc} and replace $\delta$ by the equivalence $\at(a_0,b_0)$.  Now  we do not need a left-going sequence of quasiorders. Hence,  and this was a surprise in 1995, we do not need the figure to end on the right. So $A$ can be $\set{a_i:i\in \Nnul}\cup\set{b_i:i\in \Nnul}$, where $\Nnul=\set{0,1,2,\dots}$; this was the moment when an \emph{infinite base set} came into the picture. 

Infinite base sets required new techniques, first for quasiorder lattices, see \cite{chajdaczedli}. The new techniques were soon adapted to infinite equivalence lattices; see, e.g., \cite{czg4genEqLarge}. Later, it appeared that these techniques are useful for finite equivalence lattices; see \cite{czgDEBRauth} and \cite{czedlioluoch}. Due to the results of these two papers, a connection with cryptography has been discovered; see \cite{czgDEBRauth} and, mainly, \cite{czgboolegen}. This connection and many earlier results on four-element generating sets motivate Section \ref{sect:nwthm}, where a new four-element generating set is constructed. To summarize our historical remark: In some sense, most papers mentioned so far and the present one grew from the unpublished proof of Claim \ref{claim:mc95}.

Finally, to conclude this section, note that we can obtain a four-element generating set of $\Quo A$ for $|A|=19$, that is, a stronger result, as follows. (However, this argument  does not show how to step from the class of finite equivalence and quasiorder lattices to that of the infinite ones.) Going after 
\cite{czedlikulin} and using Figure \ref{figa}, add a new $\delta$-curve, a directed one, from $a_1$ to $a_2$. That is, we change $\delta$ to $\delta\vee \qu(a_1,a_2)$. 
By the proof of Claim \ref{claim:Z19}; we obtain all members of $\Equ A$ from $X:=\set{\alpha,\beta,\gamma,\delta}$. Thus, $X$ generates $\Quo A$ by (Kulin's) Lemma \ref{lemma:kulin}.

\section{A new four-element generating set with a special property}\label{sect:nwthm}
The \emph{block count} of an equivalence $\mu\in\Equ A$ is the number $\num\mu$ of blocks of (the partition corresponding to) $\mu$. We say that $X=\set{\mu_1,\mu_2,\mu_3,\mu_4}$ is a 
\emph{four-element generating set of $\Equ A$ with consecutive block counts} if
$X$ generates $\Equ A$ and $\num{\mu_{1+i}}=\num{\mu_1} +i$ for $i\in\set{1,2,3}$. We are going to prove the following theorem.

\begin{theorem}\label{thm:main} 
If the number of elements of a finite set $A$ is six or it is at least eight, then  $\Equ A$ has a four-element generating set with consecutive block counts.
\end{theorem}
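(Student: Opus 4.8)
The plan is to follow the method already used for Claims \ref{claim:Z19} and \ref{claim:mc95}: exhibit the four equivalences through an edge-colored graph on $A$, let $S$ be the sublattice they generate, and then show that $S$ contains the atom $\at(x,y)$ for every edge $(x,y)$ of a graph that connects $A$ in a single cycle; Lemma \ref{lemma:circle} then forces $S=\Equ A$. The novelty to be engineered on top of this is the block-count constraint, so before choosing any picture I would fix the bookkeeping: an equivalence assembled from a graph has block count $|A|$ minus the rank (number of spanning-forest edges) of its color class, so \emph{consecutive} block counts $m,m+1,m+2,m+3$ amount to four color classes whose ranks differ pairwise by exactly one.

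Two elementary necessary conditions frame the difficulty and should be recorded first. Since generation forces $\bigvee_i\mu_i=\latop A$, the union of the four color classes must connect $A$, giving $\sum_i(|A|-\num{\mu_i})\ge |A|-1$, i.e.\ $m\le (3|A|-5)/4$; and since generation forces $\bigwedge_i\mu_i=\labot A$, the map sending a point to its quadruple of blocks must be injective, giving $\num{\mu_1}\num{\mu_2}\num{\mu_3}\num{\mu_4}\ge |A|$. These pin the admissible value of $m$ into a narrow window and explain why only a sufficiently large (or the one small even) base set can have a chance, while the excluded sizes $|A|\le 5$ and $|A|=7$ must be ruled out by a separate finite check that no admissible consecutive quadruple of block counts actually generates.

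For the construction itself I would use a periodic, column-by-column design: a unit cell of fixed width that is appended repeatedly, arranged so that each appended cell raises the rank of every one of the four color classes by the same amount. All four block counts then grow in lockstep, so once the values $m,m+1,m+2,m+3$ are arranged at the smallest admissible base set, consecutiveness is automatically preserved at every larger size reachable by adding whole cells; the boundary (phase) of the pattern is tuned to hit the four target values exactly. Generation is then verified exactly as in \eqref{eq:rCpvZnmG2}--\eqref{eq:rCpvZnmG14}: I build right-going sequences $\rho_i,\rho_i',\dots$ by alternately joining with one generator and meeting with another, and read off an atom wherever two such sequences overlap. Because $\Equ A$ offers no inverse automorphism $\mu\mapsto\mu^{-1}$ to shorten the work (every equivalence is symmetric), I must, like Z\'adori, also run a left-going family and let the two meet in the middle; this is precisely where the fourth generator --- the analogue of $\delta$ --- and the rank-balancing are obliged to cooperate. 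The finitely many seed sizes ($|A|=6$ and the first few values $\ge 8$ covering each residue of the period) are dispatched by direct finite computations of the same kind.

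The main obstacle is meeting both demands at once. Forcing the four block counts to be \emph{exactly} consecutive at the seed size constrains the color classes very tightly, and reassigning even a single edge in order to shift one block count by one can destroy a step of the propagating computation. The construction therefore has to be chosen so that the edges responsible for rank-balancing are disjoint from those driving the atom-extraction chain, and so that the left-going and right-going families still mesh after the balancing edges are inserted. Verifying that this delicate compatibility can be achieved for $|A|=6$ and all $|A|\ge 8$, and only there, is the crux of the argument.
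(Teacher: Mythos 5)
Your high-level plan is in the same spirit as the paper's: find seed constructions at the smallest sizes and then grow the base set by a fixed-width cell in such a way that all four block counts increase by the same amount, so that consecutiveness is preserved. (The paper's cell has width two: it adjoins two points $u',v'$ and gives each generator exactly one new edge, so every block count increases by $1$; the even sizes are reached from a seed at $|A|=6$ and the odd ones from a seed at $|A|=9$.) But your proposal has a genuine gap at exactly the point you yourself flag as the ``crux.'' You propose to verify generation after each extension by re-running a global Z\'adori-style computation with right-going and left-going sequences on the whole periodic graph, and you give no reason why the block-count balancing edges and the atom-extraction chain can be made compatible; you explicitly defer this. The paper avoids this difficulty entirely by a different mechanism: it isolates an inductive invariant (the ``eligible system,'' Definition \ref{def:elig}), namely two distinguished points $u,v$ together with the complementarity of the pairs $(\alpha,\delta)$, $\bigl(\beta,\gamma\vee\at(u,v)\bigr)$, and $\bigl(\beta\vee\at(u,v),\gamma\bigr)$. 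These conditions are what allow Lemma \ref{lemma:tstp} to express the element $\peq(\latop A)=\kappa$ as a lattice term in the \emph{new} generators, hence to recover an isomorphic copy of all of $\Equ A$ inside the sublattice generated by the extended quadruple, after which only four additional atoms on $\set{u,v,u',v'}$ need to be produced and Lemma \ref{lemma:circle} finishes. Without such an invariant (or a completed global computation), knowing that the old quadruple generates $\Equ A$ gives you nothing about the new quadruple, so your induction step is not actually established.

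The second missing ingredient is the seed data itself. The theorem ultimately rests on two explicit quadruples with consecutive block counts, at $|A|=6$ (block counts $3,4,5,6$ reading \eqref{eq6dWk4}--\eqref{eq6dWk1} appropriately) and at $|A|=9$, each verified to generate by a long but finite chain of joins and meets terminating in the atoms of a spanning cycle, plus a check of the complementarity conditions. Your proposal asserts that such seeds can be ``dispatched by direct finite computations of the same kind'' but exhibits none, and your two necessary conditions ($\sum_i(|A|-\num{\mu_i})\ge|A|-1$ and $\prod_i\num{\mu_i}\ge|A|$), while correct, only constrain the search; they do not produce a generating quadruple. Also note a small slip: four integers cannot ``differ pairwise by exactly one''; you mean the four ranks should form a set of consecutive integers. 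As it stands, your text is a research plan whose two essential components --- the invariant that makes the growth step sound, and the explicit base cases --- remain to be supplied.
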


Similar properties (namely, \textquotat{same block counts} and \textquotat{the difference between the block counts   $\leq 2$}) have been studied in \cite{czg4ghoriz} and \cite{czg4gw2}; the property we consider in this section is more difficult to fulfill. 
Despite some similarities with  \cite{czg4ghoriz} and \cite{czg4gw2} in the approach, the present paper remains self-contained.

\begin{remark} We know that if $|A|<6$, then $\Equ A$ has no four-element generating set with consecutive block counts; we guess the same for  $|A|=7$. 
\end{remark}

A pair $(\mu,\nu)$ of elements of $\Equ A$ is \emph{complementary} if $\mu\vee\nu=\latop A$, the top element of $\Equ A$, and $\mu\wedge\nu=\labot A$, the bottom element of $\Equ A$.

\begin{definition}[\cite{czg4ghoriz}]\label{def:elig}
A $7$-tuple  $\asys=(A;\alpha,\beta,\gamma,\delta; u,v)$ is called an \emph{eligible system} if $A$ is a nonempty set, $\set{\alpha,\beta,\gamma,\delta}$ is a generating set of $\Equ A$, 
and the pairs $(\alpha,\delta)$, $\bigl(\beta,\gamma\vee \at(u,v)\bigr)$, and $\bigl(\beta\vee\at(u,v),\gamma\bigr)$ are complementary.
\end{definition}

 For $\rho\subseteq (A')^2$, $\peq(\rho)$ will denote the smallest equivalence of $A'$ that includes  $\rho$. For distinct elements $x,y\in A'$, let $\pat(x,y):=\peq(\set{(x,y)})$. 
 The lattice operations in $\Equ{A'}$ will be denoted by $\pvee$ and $\pwedge$.

\begin{lemma}\label{lemma:tstp}
Let $\asys$ be an eligible system with components denoted as in Definition \ref{def:elig}. Assume that $u',v'\notin A$ and $u'\neq v'$. Let $A':=A\cup\set{u',v'}$, $\alpha':=\peq(\alpha) \pvee \pat(u,u')$, $\beta':=\peq(\beta)\pvee \pat(u,v')$, $\gamma':=\peq(\gamma)\pvee\pat(v,v')$, $\delta':=\peq(\delta)\pvee\pat(u',v')$. Then $\asys':=(A';\alpha',\beta',\gamma',\delta';u',v')$ is an eligible system, too.
 Furthermore, if $\Phi:=\set{\alpha,\beta,\gamma,\delta}$ is of consecutive block counts, then so is $\Phi':=\set{\alpha',\beta',\gamma',\delta'}$.
\end{lemma}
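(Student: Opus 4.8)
The plan is to read off the four new generators concretely and then handle the block-count claim, the three complementary pairs, and the generation of $\Equ{A'}$ in that order, spending almost all the effort on generation. Writing $[x]_\rho$ for the block of $x$ in $\rho$, the construction says: $\alpha'$ merges $u'$ into $[u]_\alpha$ and keeps $v'$ a singleton; $\beta'$ merges $v'$ into $[u]_\beta$ and keeps $u'$ a singleton; $\gamma'$ merges $v'$ into $[v]_\gamma$ and keeps $u'$ a singleton; and $\delta'$ adds the new block $\{u',v'\}$ while keeping the old $\delta$-blocks. From this description the map $\peq\colon\Equ A\to\Equ{A'}$ that extends an equivalence of $A$ by the singletons $\{u'\},\{v'\}$ is visibly a lattice embedding, since meets and joins are computed blockwise and never touch $u',v'$. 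This gives the Furthermore part at once: each $\peq(\alpha),\dots,\peq(\delta)$ has exactly two more blocks than its unprimed version, and gluing in one new edge removes exactly one block, so $\num{\alpha'}=\num\alpha+1$ and likewise for $\beta',\gamma',\delta'$; adding $1$ to all four counts preserves consecutiveness.

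Next I would record the consequences of the hypotheses on $\asys$ that the rest of the proof consumes: from $\beta\wedge(\gamma\vee\at(u,v))=\labot A$ one gets $u\not\sim_\beta v$ and $[u]_\beta\cap[v]_{\gamma\vee\at(u,v)}=\{u\}$; symmetrically $(\beta\vee\at(u,v))\wedge\gamma=\labot A$ gives $[v]_\gamma\cap[u]_{\beta\vee\at(u,v)}=\{v\}$; and $\alpha\wedge\delta=\labot A$ gives $[u]_\alpha\cap[u]_\delta=\{u\}$. The complementary pairs of $\asys'$ then follow by blockwise computation, with the distinguished pair now $(u',v')$ and with $v'$ (resp.\ $u'$) playing the former role of $\at(u,v)$. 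For instance $(\alpha',\delta')$ is complementary because under $\alpha'$ the point $u'$ lies in $[u]_\alpha$ while under $\delta'$ it lies with $v'$, forcing the meet to be $\labot{A'}$, whereas the join pulls $u',v'$ into the top block via $\alpha\vee\delta=\latop A$. For $(\beta',\gamma'\pvee\pat(u',v'))$, the relation $\gamma'\pvee\pat(u',v')$ attaches $u'$ to $[v]_\gamma$, and in the join $v'$ bridges $[u]_\beta$ and $[v]_\gamma$, reproducing $\at(u,v)$ and hence (using $\beta\vee\gamma\vee\at(u,v)=\latop A$) the top; the singleton-intersection facts give the bottom. The third pair is symmetric.

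The crux is that $\Phi'$ generates $\Equ{A'}$; let $S'$ be the sublattice it generates. Stage one recovers $\peq(\Equ A)$ inside $S'$. I would first show that $\beta'\pvee\gamma'$ has $A\cup\{v'\}$ as its only nonsingleton block (again $v'$ bridges $[u]_\beta$ and $[v]_\gamma$), whence $\peq(\delta)=(\beta'\pvee\gamma')\pwedge\delta'\in S'$; then $\alpha'\pvee\peq(\delta)$ has sole nonsingleton block $A\cup\{u'\}$, and meeting these two almost-top equivalences yields $\peq(\latop A)\in S'$. Consequently $\peq(\alpha)=\peq(\latop A)\pwedge\alpha'\in S'$, and likewise $\peq(\beta),\peq(\gamma),\peq(\delta)\in S'$; since $\{\alpha,\beta,\gamma,\delta\}$ generates $\Equ A$ and $\peq$ is an embedding, $\peq(\Equ A)\subseteq S'$, so every atom $\pat(x,y)$ with $x,y\in A$ lies in $S'$. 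Stage two connects the new points: using the singleton-intersection facts I would verify $\pat(u,v')=\beta'\pwedge(\gamma'\pvee\peq(\at(u,v)))$, $\pat(v,v')=\gamma'\pwedge(\beta'\pvee\peq(\at(u,v)))$, $\pat(u,u')=\alpha'\pwedge(\delta'\pvee\pat(u,v'))$, and $\pat(u',v')=\delta'\pwedge(\pat(u,u')\pvee\pat(u,v'))$, where in each case a complementarity collapses an intersection of two blocks to a single point, which is exactly what makes the right-hand side an atom. Finally, with $\pat(u,u')$, $\pat(v,v')$, $\pat(u',v')$ and all atoms of $\Equ A$ available, I would route a Hamiltonian path of $A$ from $v$ to $u$ and close it through $u\to u'\to v'\to v$, obtaining a Hamiltonian cycle of $A'$ all of whose edge-atoms lie in $S'$, and conclude $S'=\Equ{A'}$ by Lemma~\ref{lemma:circle}.

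I expect Stage two to be the real obstacle. The tempting shortcut — taking all atoms of $\Equ A$ plus a single edge from $u'$ and a single edge from $v'$ — fails when both edges land at the same vertex: the resulting pendant/triangle at one point, say from $\pat(u,u')$ and $\pat(u,v')$, does not generate $\Equ{A'}$, as one sees from the fact that a block meeting both $\{u',v'\}$ and $A\setminus\{u\}$ while avoiding $u$ can never be produced from such generators. The resolution is to anchor $u'$ and $v'$ at the two \emph{different} points $u$ and $v$, so that the forced local structure $u\to u'\to v'\to v$ extends to a genuine Hamiltonian cycle. Producing $\pat(v,v')$ with $v\ne u$, rather than a second edge at $u$, is therefore the decisive step, and it is precisely there that the second and third complementary pairs of the eligible system are used in an essential way.
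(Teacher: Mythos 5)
Your proposal is correct and follows essentially the same route as the paper: you embed $\Equ A$ into $\Equ{A'}$ via the singleton extension $\peq$, recover $\peq(\latop A)$ by the very same term $(\beta'\pvee\gamma')\pwedge\bigl(\alpha'\pvee(\delta'\pwedge(\beta'\pvee\gamma'))\bigr)$ (merely computed in a different order), pull back all of $\peq(\Equ A)$ into the generated sublattice, produce the atoms joining $u'$ and $v'$ to $A$ by the same kind of meet-of-a-join computations (your choice of which four atoms to compute differs harmlessly from the paper's), and close a Hamiltonian cycle to invoke Lemma~\ref{lemma:circle}. The only differences are cosmetic, so the argument is a faithful, complete reconstruction of the paper's proof.
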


\begin{proof}[Proof]  The situation is visualized in Figure \ref{fig1}, where the blocks of some elements, all important elements from our perspective,  are drawn. The three blocks drawn by solid lines are blocks of some members of $\Phi\subseteq\Equ A$. The seven blocks drawn in non-solid line styles (dotted and various kinds of dashed) are blocks of the equivalences belonging to $\Phi'\subseteq \Equ{A'}$. The figure uses different line styles or distinct colors for the blocks of different equivalences, but we use the same color for $\epsilon\in\Phi$ and $\epsilon'$. Note that the geometrically large blocks on the left could be singletons and, on the other hand, $u/\alpha:=\set{x: (x,u)\in\alpha}$ and $v/\gamma$ can be but need not be disjoint.  Not all blocks of all $\epsilon$ and $\epsilon'$ are drawn for $\epsilon\in\Phi$. However, for any $x\in A$ and $\epsilon\in \Phi$, if the block $x/\epsilon$ is not drawn, then  $x/\epsilon=x/\epsilon'$. The last sentence of Lemma \ref{lemma:tstp} follows from the trivial fact that $\num{\epsilon'}=1+\num\epsilon$ holds for every $\epsilon\in\Phi$. 
Applying a lemma from \cite{czg4ghoriz} twice (in a \textquotat{twisted way} and in a \textquotat{straight way}), we could derive the rest of  Lemma \ref{lemma:tstp} from  \cite{czg4ghoriz}. To keep the paper self-contained, we give a different and direct proof.

\begin{figure}[ht] 
\centerline{ \includegraphics[width=\figwidthcoeff\textwidth]{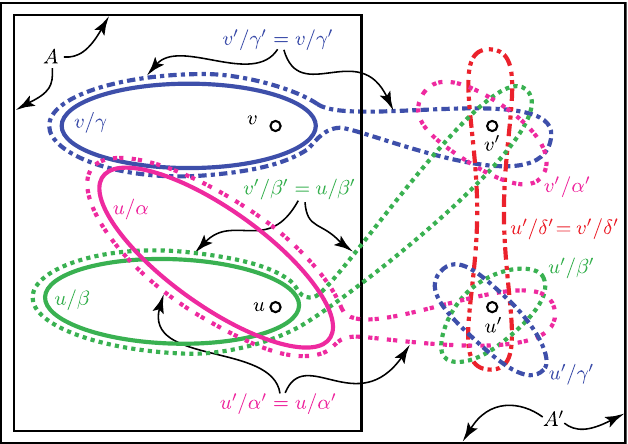}} \caption{Illustrating the proof of Lemma \ref{lemma:tstp}}\label{fig1}
\end{figure}

The existence of an $x\in u/\beta \wedge v/\gamma$ would violate the conjunction of $\beta\wedge(\gamma\vee\at(u,v))=\labot A$ and $\gamma\wedge(\beta\vee\at(u,v))=\labot A$ ---call them the \emph{meet conditions} for $\beta$ and $\gamma$--- and $u\neq v$. Thus,  $u/\beta$ and $v/\gamma$ are disjoint. 

By the two paragraphs above, Figure \ref{fig1} faithfully represents the situation and contains all the details the proof needs.  Hence, it is straightforward to verify that the three pairs in 
Definition \ref{def:elig} for $\Equ{A'}$  are complementary.
Let $S$ and $E$ denote the sublattice generated by $\Phi'$ in $\Equ{A'}$ and the sublattice $\{\mu\in \Equ{A'}$ : both $u'/\mu$ and $v'/\mu$ are singletons$\}$. Then  $f\colon \Equ A\to E$ defined by  $\mu\mapsto \peq(\mu)$ is a lattice isomorphism. 

Observe that  $\set{u'}$ is a singleton block of  
$\beta'\pvee \gamma'$. Furthermore, $\set{v'}$ is a singleton block of both $\alpha'$ and $\delta'\pwedge (\beta'\pvee \gamma')$. Thus $\set{v'}$ is a singleton block of
$\alpha'\pvee \bigl(\delta'\pwedge (\beta'\pvee \gamma')\bigr)$. Therefore, with
\begin{equation*}
\kappa:=(\beta'\pvee \gamma')  \pwedge \Bigl(\alpha'\pvee \bigl(\delta'\pwedge (\beta'\pvee \gamma')\bigr)\Bigr),
\end{equation*}
$|u'/\kappa|=|v'/\kappa|=1$. Using the fact that $\epsilon\subseteq \epsilon'$ for all $\epsilon\in X$, the join condition $\beta\vee\at(u,v)\vee\gamma=\latop A$  for $\beta$ and $\gamma$, and $(u,v)\in\beta'\pvee\gamma'$, we obtain that $A^2\subseteq \beta'\pvee \gamma'$.
By the previous two \textquotat{$\subseteq$} inclusions, $\delta\subseteq \delta'\pwedge (\beta'\pvee \gamma')$. Using this fact, $\alpha\subseteq \alpha'$, and the join condition for the complementary pair  $(\alpha,\delta)$, we obtain that $A^2\subseteq \kappa$.
Combining this with $|u'/\kappa|=|v'/\kappa|=1$, we have that $f(\latop A)=\kappa\in S$. Thus, for all $\epsilon \in \Phi$, $f(\epsilon)=f(\latop A)\wedge \epsilon'\in S$, whereby $f(\Phi)\subseteq S$. Since $\Phi$ generates $\Equ A$ and $f\colon \Equ A\to E$ is an isomorphism, we obtain that $E \subseteq S$.  In particular, $\pat(u,v)=f(\at(u,v))\in S$. As the following equalities are clear by the figure, we obtain further elements of $S$ as follows:
\begin{align}
\pat(v,v')&=(\pat(u,v)\pvee \beta')\pwedge \gamma'\in S, \label{eq:zwnLbrBrta}\\
\pat(v',u)&=(\pat(u,v)\vee \pat(v,v'))\pwedge \beta'\in S, \nonumber \\
\pat(v',u')&= (\pat(v',u)\pvee \alpha')\pwedge \delta'  \in S \text{, and} \label{eq:zwnLbrBrtb}\\
\pat(u',u)&=\alpha' \pwedge ( \pat(u',v')\pvee \pat(v',u) ) \in S.\label{eq:zwnLbrBrtc}
\end{align}
Finally, since $E\subseteq S$ and we have \eqref{eq:zwnLbrBrta}, \eqref{eq:zwnLbrBrtb}, and \eqref{eq:zwnLbrBrtc}, Lemma \ref{lemma:circle}  implies that $S=\Equ{A'}$. This completes the proof of Lemma \ref{lemma:tstp}.
\end{proof}

\begin{lemma}\label{lemma6elig} With $A=\set{1,2,\dots,6}$, 
\begin{align}
\alpha&:=\eq(12;3;45;6), \label{eq6dWk1}\\
\beta&:=\eq(1;2;34;5;6) ,\label{eq6dWk2}\\
\gamma&:=\eq(13;24;56) ,\text{ and} \label{eq6dWk3}\\
\delta&:=\eq(146;235), \label{eq6dWk4}
\end{align}
$\asys=(A;\alpha,\beta,\gamma,\delta; 4,6)$ is an eligible system with consecutive block counts.
\end{lemma}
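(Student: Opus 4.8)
The plan is to verify, one at a time, the three conditions packed into Definition \ref{def:elig} together with the ``consecutive block counts'' requirement, treating the generation of $\Equ A$ as the heart of the matter. The block-count condition is immediate: reading off \eqref{eq6dWk1}--\eqref{eq6dWk4} gives $\num\delta=2$, $\num\gamma=3$, $\num\alpha=4$, and $\num\beta=5$, so after listing the four equivalences in the order $\delta,\gamma,\alpha,\beta$ their block counts are the consecutive integers $2,3,4,5$; hence $\set{\alpha,\beta,\gamma,\delta}$ has consecutive block counts. Next I would check the three complementary pairs for $u=4$, $v=6$. This is a routine finite verification once one records $\gamma\vee\at(4,6)=\eq(13;2456)$ and $\beta\vee\at(4,6)=\eq(1;2;346;5)$ and then reads joins and meets straight off the block lists. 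I expect each of $\alpha\vee\delta$, $\beta\vee(\gamma\vee\at(4,6))$, and $(\beta\vee\at(4,6))\vee\gamma$ to equal $\latop A$, while $\alpha\wedge\delta$, $\beta\wedge(\gamma\vee\at(4,6))$, and $(\beta\vee\at(4,6))\wedge\gamma$ all collapse to $\labot A$, because on each side the lone nontrivial block intersects the opposite partition only in singletons.

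The substantial part is showing that $\Phi=\set{\alpha,\beta,\gamma,\delta}$ generates $\Equ A$. My strategy is to produce, inside the sublattice $S$ generated by $\Phi$, the atoms $\at(x,y)$ along a cyclic enumeration of $A=\set{1,\dots,6}$ and then invoke Lemma \ref{lemma:circle}. A useful first observation is that the only nontrivial pairwise joins are $\alpha\vee\beta=\eq(12;345;6)$ and $\beta\vee\gamma=\eq(1234;56)$, since $\alpha\vee\gamma=\alpha\vee\delta=\beta\vee\delta=\gamma\vee\delta=\latop A$, and all pairwise meets of generators are $\labot A$. Meeting the two nontrivial joins against the generators already yields atoms: for instance $\at(1,2)=(\beta\vee\gamma)\wedge\alpha$ and $\at(3,5)=(\alpha\vee\beta)\wedge\delta$. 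A short chain of further operations then delivers the rest of the atoms supported on $\set{1,2,3,4,5}$; e.g.\ $((\alpha\vee\beta)\wedge(\beta\vee\gamma))\wedge\beta=\at(3,4)$ and $(\at(3,4)\vee\at(3,5))\wedge\alpha=\at(4,5)$, after which one can build up a connected family of atoms on $\set{1,2,3,4,5}$.

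The hard part, which I expect to be the main obstacle, is reaching the point $6$: it occurs only in the block $\set{5,6}$ of $\gamma$ and the block $\set{1,4,6}$ of $\delta$, and every naive join that tries to attach $6$ to the rest jumps straight to $\latop A$ (as the list of top-valued joins above shows). Consequently an atom $\at(x,6)$ cannot be read off directly and must be extracted by a carefully balanced interplay of joins and meets. The plan here is to form $\gamma\vee\at(4,5)=\eq(13;2456)$ and intersect it with $\delta$, producing $\at(4,6)\vee\at(2,5)$, and likewise to compute $(\beta\vee\gamma)\wedge\delta=\at(1,4)\vee\at(2,3)$. The delicate remaining step is to separate such two-atom joins into their individual summands by meeting them against equivalences already known to lie in $S$ (assembled from the atoms on $\set{1,\dots,5}$), so that at least one atom $\at(x,6)$ is isolated; this is the step where the real combinatorial work lies, since the $6$-block tends to drag a partner pair along with it.

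Once enough atoms are secured in $S$ to cover a single cyclic (Hamiltonian) enumeration of $\set{1,\dots,6}$, Lemma \ref{lemma:circle} gives $S=\Equ A$, so $\Phi$ generates $\Equ A$. Combined with the block-count check and the three complementary pairs, this shows that $\asys=(A;\alpha,\beta,\gamma,\delta;4,6)$ is an eligible system with consecutive block counts, completing the proof.
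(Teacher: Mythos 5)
Your framework is the same as the paper's: check block counts and the three complementary pairs directly, then show generation by producing a cycle of atoms and invoking Lemma \ref{lemma:circle}. The parts you actually carry out are correct: the block counts $2,3,4,5$ for $\delta,\gamma,\alpha,\beta$; the complementarity computations; $\alpha\vee\beta=\eq(12;345;6)$ and $\beta\vee\gamma=\eq(1234;56)$; and the atoms $\at(1,2)=(\beta\vee\gamma)\wedge\alpha$, $\at(3,5)=(\alpha\vee\beta)\wedge\delta$, $\at(3,4)=\beta$, and $\at(4,5)=(\at(3,4)\vee\at(3,5))\wedge\alpha$ all match steps in the paper's own derivation.

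However, there is a genuine gap: the step you yourself identify as ``where the real combinatorial work lies'' --- splitting two-atom elements such as $\at(4,6)\vee\at(2,5)=\delta\wedge(\gamma\vee\at(4,5))$ and $\at(1,4)\vee\at(2,3)=\delta\wedge(\beta\vee\gamma)$ into single atoms --- is never performed, and for a lemma whose entire content is this explicit computation, announcing the intention is not a proof. The gap is real in two places. First, your atoms $\at(1,2),\at(3,4),\at(3,5),\at(4,5)$ leave $\set{1,2}$ and $\set{3,4,5}$ in separate components, so even connectivity on $\set{1,\dots,5}$ already depends on the unexecuted separation. Second, and more seriously, your stated method --- ``meeting them against equivalences already known to lie in $S$'' --- does not obviously succeed for the element $6$: every element of $S$ you have exhibited that contains $(4,6)$ (namely $\delta$, $\delta\wedge(\gamma\vee\at(4,5))$, and their joins with known atoms) also contains $(2,5)$ or collapses to something useless under the available meets, so a bare meet cannot peel off $\at(4,6)$ from what you have. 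The paper's resolution is a join-\emph{then}-meet maneuver you do not anticipate: from $\delta\wedge(\gamma\vee\at(3,5))=\at(1,6)\vee\at(3,5)$ one first joins with $\at(1,2)$ to get $\eq(126;35;4)$ and only then meets with $\gamma\vee\at(4,5)=\eq(13;2456)$, which isolates $\at(2,6)$; the remaining atoms of the cycle $1,2,6,5,4,3,1$ then follow by similar explicit steps. Until some such concrete chain is written down and checked, the generation claim --- the heart of the lemma --- is unproved.
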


\begin{proof} 
Let $\Phi:=\set{\alpha,\beta,\gamma,\delta}$, and let $S$ stand for the sublattice generated by $S$. The labels above the equality signs will indicate which members of $S$ imply that the equivalences on the left of these equality signs belong to $S$.
\allowdisplaybreaks{
\begin{align}
\eq(12;345;6)  & \deqref{eq6dWk1}{eq6dWk2} \eq(12;3;45;6)\vee \eq(1;2;34;5;6) ,\label{eq6dWk5}\\
\eq(1234;56)  & \deqref{eq6dWk2}{eq6dWk3}     \eq(1;2;34;5;6)\vee \eq(13;24;56) ,\label{eq6dWk8}\\
\eq(12;3;4;5;6) &  \deqref{eq6dWk1}{eq6dWk8} \eq(12;3;45;6)\wedge \eq(1234;56) ,\label{eq6dWk9}\\
\eq(1;2;35;4;6)  & \deqref{eq6dWk4}{eq6dWk5} \eq(146;235)\wedge \eq(12;345;6) ,\label{eq6dWk10}\\
\eq(14;23;5;6)  & \deqref{eq6dWk4}{eq6dWk8}  \eq(146;235)\wedge \eq(1234;56) ,\label{eq6dWk11}\\
\eq(1;2;345;6)  & \deqref{eq6dWk2}{eq6dWk10} \eq(1;2;34;5;6)\vee \eq(1;2;35;4;6) ,\label{eq6dWk14}\\
\eq(1356;24)   & \deqref{eq6dWk3}{eq6dWk10} \eq(13;24;56)\vee \eq(1;2;35;4;6) ,\label{eq6dWk16}\\
\eq(14;235;6)  &  \deqref{eq6dWk10}{eq6dWk11} \eq(1;2;35;4;6)\vee \eq(14;23;5;6) ,\label{eq6dWk18}\\
\eq(1;2;3;45;6) &  \deqref{eq6dWk1}{eq6dWk14} \eq(12;3;45;6)\wedge \eq(1;2;345;6) ,\label{eq6dWk19}\\
\eq(16;2;35;4)  & \deqref{eq6dWk4}{eq6dWk16} \eq(146;235)\wedge \eq(1356;24) ,\label{eq6dWk21}\\
\eq(13;2456)  & \deqref{eq6dWk3}{eq6dWk19}  \eq(13;24;56)\vee \eq(1;2;3;45;6) ,\label{eq6dWk25}\\
\eq(126;35;4)  &  \deqref{eq6dWk9}{eq6dWk21} \eq(12;3;4;5;6)\vee \eq(16;2;35;4) ,\label{eq6dWk26}\\
\eq(145;23;6)  & \deqref{eq6dWk11}{eq6dWk19}  \eq(14;23;5;6)\vee \eq(1;2;3;45;6) ,\label{eq6dWk27}\\
\eq(15;2;3;4;6)  & \deqref{eq6dWk16}{eq6dWk27}  \eq(1356;24)\wedge \eq(145;23;6) ,\label{eq6dWk31}\\
\eq(1;26;3;4;5) &  \deqref{eq6dWk25}{eq6dWk26} \eq(13;2456)\wedge \eq(126;35;4) ,\label{eq6dWk33}\\
\eq(135;2;4;6)  &  \deqref{eq6dWk10}{eq6dWk31} \eq(1;2;35;4;6)\vee \eq(15;2;3;4;6) ,\label{eq6dWk47}\\
\eq(14;2356)  & \deqref{eq6dWk18}{eq6dWk33}  \eq(14;235;6)\vee \eq(1;26;3;4;5) ,\label{eq6dWk60}\\
\eq(13;2;4;5;6) &   \deqref{eq6dWk3}{eq6dWk47} \eq(13;24;56)\wedge \eq(135;2;4;6) ,\label{eq6dWk79}\\
\eq(1;2;3;4;56)  &   \deqref{eq6dWk3}{eq6dWk60} \eq(13;24;56)\wedge \eq(14;2356). \label{eq6dWk80}
\end{align}
}
In particular, 
$\at(1,2)\in S$ by  \eqref{eq6dWk9},        
$\at(2,6)\in S$ by \eqref{eq6dWk33}, 
$\at(6,5)\in S$ by \eqref{eq6dWk80}, 
$\at(5,4)\in S$ by \eqref{eq6dWk19}, 
$\at(4,3)\in S$ by \eqref{eq6dWk2}, and 
$\at(3,1)\in S$ by \eqref{eq6dWk79}. Hence, $\Phi$ is a generating set by  Lemma \ref{lemma:circle}. Clearly,
$\Phi$ is of consecutive block counts. 
It is easy to check that the pairs in Definition \ref{def:elig} are complementary. 
Thus, $\asys$ is an eligible system, proving Lemma \ref{lemma6elig}.
\end{proof}
The author has created a program package called \textquotat{equ2024p}, available from his website \href{http://tinyurl.com/g-czedli/}{http://tinyurl.com/g-czedli/}. This program package can also \textquotat{prove} that $\Phi$ generates $\Equ A$, but verifying the programs is much more difficult than verifying the proofs of Lemmas \ref{lemma6elig} and (the next) \ref{lemma9elig}.

\begin{lemma}\label{lemma9elig} With $A=\set{1,2,\dots,9}$, 
\begin{align}
\alpha&:=\eq(158;2;3;47;69), \label{eq9j1}\\
\beta&:=\eq(1;23;4;56;78;9), \label{eq9j2}\\
\gamma&:=\eq(135;268;4;79), \text{ and} \label{eq9j3}\\
\delta&:=\eq(16;257;3489) ,\label{eq9j4}
\end{align}
$\asys=(A;\alpha,\beta,\gamma,\delta; 1,4)$ is an eligible system with consecutive block counts.
\end{lemma}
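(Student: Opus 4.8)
The plan is to follow the same three-part strategy as in the proof of Lemma \ref{lemma6elig}. Write $\Phi:=\set{\alpha,\beta,\gamma,\delta}$ and let $S$ denote the sublattice of $\Equ A$ generated by $\Phi$. Two of the three requirements of an eligible system with consecutive block counts are quickly dispatched. For the block counts, one simply counts blocks in \eqref{eq9j1}--\eqref{eq9j4}: $\num\delta=3$, $\num\gamma=4$, $\num\alpha=5$, and $\num\beta=6$, so $\Phi$ is of consecutive block counts. For complementarity, one computes the six relevant joins and meets directly from \eqref{eq9j1}--\eqref{eq9j4}; with $u=1$ and $v=4$ one first records $\gamma\vee\at(1,4)=\eq(1345;268;79)$ and $\beta\vee\at(1,4)=\eq(14;23;56;78;9)$, after which checking that each of $(\alpha,\delta)$, $\bigl(\beta,\gamma\vee\at(1,4)\bigr)$, and $\bigl(\beta\vee\at(1,4),\gamma\bigr)$ has join $\latop A$ and meet $\labot A$ is a routine block-by-block verification.

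The substantial part is proving that $\Phi$ generates $\Equ A$. By Lemma \ref{lemma:circle}, it suffices to exhibit atoms $\at(x,y)\in S$ for every edge $(x,y)$ of a single Hamiltonian cycle through $\set{1,2,\dots,9}$, that is, nine atoms $\at(a_0,a_1),\at(a_1,a_2),\dots,\at(a_8,a_0)$ for a suitable cyclic listing $a_0,\dots,a_8$ of $A$. The plan is to produce these atoms by a chain of computations exactly in the style of \eqref{eq6dWk5}--\eqref{eq6dWk80}: each line displays a new equivalence written as a join or meet of two equivalences that are either generators or were obtained on earlier lines, so that membership in $S$ propagates down the list. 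Starting from the generators, one alternately joins to build up coarser equivalences and meets to isolate finer ones, until one has reached a supply of atoms whose pairs form a cycle visiting all nine points. Once these cyclically consecutive atoms lie in $S$, Lemma \ref{lemma:circle} yields $S=\Equ A$, and together with the two easy parts above this shows that $\asys$ is an eligible system with consecutive block counts.

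The main obstacle is finding the explicit computation chain of the previous paragraph. Unlike the two easy parts, this is not a matter of routine checking but of search: one must discover an order of joins and meets that actually reaches the required atoms, and with nine points the chain is appreciably longer and more delicate than the one for $|A|=6$. I expect the bulk of the effort to be the bookkeeping --- tracking, at every step, which block each of the nine points lies in, and confirming that each claimed meet or join is correct --- even though every individual step is elementary. This is presumably also where the author's \textquotat{equ2024p} package is most useful as an independent check on the hand computation.
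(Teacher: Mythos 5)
Your overall strategy coincides with the paper's: verify the block counts and the three complementary pairs by direct inspection, and reduce the generation claim to Lemma \ref{lemma:circle} by exhibiting atoms $\at(x,y)\in S$ along a Hamiltonian cycle through $\set{1,\dots,9}$, each obtained from a chain of joins and meets starting at the generators. Your treatment of the two easy parts is correct (the block counts are indeed $3,4,5,6$ for $\delta,\gamma,\alpha,\beta$, and your computations of $\gamma\vee\at(1,4)$ and $\beta\vee\at(1,4)$ are right), and the paper itself dismisses these parts as trivial.

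However, there is a genuine gap: the entire substantive content of the proof is the explicit computation chain, and you have not produced it. You correctly identify that such a chain is needed and that finding it is a search problem rather than a routine verification, but identifying the obstacle is not the same as overcoming it. The paper's proof of this lemma consists of roughly sixty displayed join/meet identities, culminating in the nine atoms $\at(5,1)$, $\at(1,6)$, $\at(6,9)$, $\at(9,4)$, $\at(4,3)$, $\at(3,2)$, $\at(2,7)$, $\at(7,8)$, $\at(8,5)$, whose underlying pairs form the required cycle; without some such chain (or an equivalent certificate, e.g.\ output of a verified program) the claim that $\Phi$ generates $\Equ A$ is simply unproven. Note also that there is no a priori guarantee that a chain of this kind exists for an arbitrary four-tuple with consecutive block counts --- the generators here were chosen precisely so that one does --- so the existence of the chain cannot be taken on faith from the analogy with Lemma \ref{lemma6elig}; it must be exhibited.
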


The proof of this lemma is similar to but more than three times longer than the previous proof. As the reader would hardly 
enjoy such an amount of technicalities, the proof goes into the Appendix  (Section \ref{sect:append}) of the paper.

Now, we are in the position to prove our theorem.

\begin{proof}[Proof of Theorem \ref{thm:main}] Combine Lemmas \ref{lemma:tstp}, \ref{lemma6elig}, and \ref{lemma9elig}.
\end{proof}

\section{Appendix}\label{sect:append}

\begin{proof}[Proof of Lemma \ref{lemma9elig}] 
The idea is the same as in the proof of Lemma \ref{lemma6elig} but the concrete computations are different.
Again, $S$ denotes the sublattice generated by $\set{\alpha,\dots,\delta}$. Let us compute; 
to avoid line breaks, the formatting will be slightly different from that of \eqref{eq6dWk5}--\eqref{eq6dWk80}.
\allowdisplaybreaks{
\begin{align}
&\eq(1456789;23) \shdeqref{eq9j1}{eq9j2} \eq(158;2;3;47;69)\vee \eq(1;23;4;56;78;9), \label{eq9j6}  \\
& \eq(15;2;3;4;6;7;8;9) \shdeqref{eq9j1}{eq9j3} \eq(158;2;3;47;69)\wedge \eq(135;268;4;79), \label{eq9j7}  \\
&\eq(12356789;4) \shdeqref{eq9j2}{eq9j3} \eq(1;23;4;56;78;9)\vee \eq(135;268;4;79), \label{eq9j9}  \\
&\eq(158;2;3;4;69;7) \shdeqref{eq9j1}{eq9j9} \eq(158;2;3;47;69)\wedge \eq(12356789;4), \label{eq9j10}  \\
&\eq(156;23;4;78;9) \shdeqref{eq9j2}{eq9j7} \eq(1;23;4;56;78;9)\vee \eq(15;2;3;4;6;7;8;9), \label{eq9j11}  \\
&\eq(15;2;3;4;68;79) \shdeqref{eq9j3}{eq9j6} \eq(135;268;4;79)\wedge \eq(1456789;23), \label{eq9j12}  \\
&\eq(16;2;3;489;57) \shdeqref{eq9j4}{eq9j6} \eq(16;257;3489)\wedge \eq(1456789;23), \label{eq9j13}  \\
&\eq(12567;3489) \shdeqref{eq9j4}{eq9j7} \eq(16;257;3489)\vee \eq(15;2;3;4;6;7;8;9), \label{eq9j14}  \\
&\eq(16;257;389;4) \shdeqref{eq9j4}{eq9j9} \eq(16;257;3489)\wedge \eq(12356789;4), \label{eq9j15}  \\
&\eq(1456789;2;3) \shdeqref{eq9j1}{eq9j12} \eq(158;2;3;47;69)\vee \eq(15;2;3;4;68;79), \label{eq9j17}  \\
& \eq(1;2;3;4;56;7;8;9) \shdeqref{eq9j2}{eq9j14} \eq(1;23;4;56;78;9)\wedge \eq(12567;3489), \label{eq9j18}  \\
&\eq(15;26;3;4;7;8;9) \shdeqref{eq9j3}{eq9j14} \eq(135;268;4;79)\wedge \eq(12567;3489), \label{eq9j19}  \\
& \eq(16;2;3;4;5;7;8;9) \shdeqref{eq9j4}{eq9j11} \eq(16;257;3489)\wedge \eq(156;23;4;78;9), \label{eq9j20}  \\
&\eq(15689;2;3;47) \shdeqref{eq9j1}{eq9j18} \eq(158;2;3;47;69)\vee \eq(1;2;3;4;56;7;8;9), \label{eq9j27}  \\
&\eq(1;2;3;4;56;78;9) \shdeqref{eq9j2}{eq9j17} \eq(1;23;4;56;78;9)\wedge \eq(1456789;2;3), \label{eq9j29}  \\
&\eq(12356;4;78;9) \shdeqref{eq9j2}{eq9j19} \eq(1;23;4;56;78;9)\vee \eq(15;26;3;4;7;8;9), \label{eq9j30}  \\
&\eq(123568;4;79) \shdeqref{eq9j3}{eq9j18} \eq(135;268;4;79)\vee \eq(1;2;3;4;56;7;8;9), \label{eq9j31}  \\
&\eq(158;2;3;4;6;7;9) \shdeqref{eq9j1}{eq9j31} \eq(158;2;3;47;69)\wedge \eq(123568;4;79), \label{eq9j42}  \\
&\eq(135;26;4;7;8;9) \shdeqref{eq9j3}{eq9j30} \eq(135;268;4;79)\wedge \eq(12356;4;78;9), \label{eq9j46}  \\
&\eq(16;2;3;4;5;7;89) \shdeqref{eq9j4}{eq9j27} \eq(16;257;3489)\wedge \eq(15689;2;3;47), \label{eq9j47}  \\
&\eq(16;25;38;4;7;9) \shdeqref{eq9j4}{eq9j31} \eq(16;257;3489)\wedge \eq(123568;4;79), \label{eq9j49}  \\
&\eq(1256;3;4;78;9) \shdeqref{eq9j19}{eq9j29} \eq(15;26;3;4;7;8;9)\vee \eq(1;2;3;4;56;78;9), \label{eq9j58}  \\
&\eq(1358;269;47) \shdeqref{eq9j1}{eq9j46} \eq(158;2;3;47;69)\vee \eq(135;26;4;7;8;9), \label{eq9j63}  \\
&\eq(15678;23;4;9) \shdeqref{eq9j2}{eq9j42} \eq(1;23;4;56;78;9)\vee \eq(158;2;3;4;6;7;9), \label{eq9j65}  \\
&\eq(156;23;4;789) \shdeqref{eq9j2}{eq9j47} \eq(1;23;4;56;78;9)\vee \eq(16;2;3;4;5;7;89), \label{eq9j66}  \\
&\eq(123567;489) \shdeqref{eq9j13}{eq9j46} \eq(16;2;3;489;57)\vee \eq(135;26;4;7;8;9), \label{eq9j74}  \\
&\eq(1256;378;4;9) \shdeqref{eq9j29}{eq9j49} \eq(1;2;3;4;56;78;9)\vee \eq(16;25;38;4;7;9), \label{eq9j81}  \\
&\eq(12356;4;789) \shdeqref{eq9j30}{eq9j47} \eq(12356;4;78;9)\vee \eq(16;2;3;4;5;7;89), \label{eq9j83}  \\
&\eq(1256;3;4;789) \shdeqref{eq9j47}{eq9j58} \eq(16;2;3;4;5;7;89)\vee \eq(1256;3;4;78;9), \label{eq9j92}  \\
&\eq(15;2;3;4;6;79;8) \shdeqref{eq9j3}{eq9j66} \eq(135;268;4;79)\wedge \eq(156;23;4;789), \label{eq9j93}  \\
&\eq(15;26;3;4;79;8) \shdeqref{eq9j3}{eq9j92} \eq(135;268;4;79)\wedge \eq(1256;3;4;789), \label{eq9j95}  \\
& \eq(1;2;38;4;5;6;7;9) \shdeqref{eq9j4}{eq9j63} \eq(16;257;3489)\wedge \eq(1358;269;47), \label{eq9j96}  \\
&\eq(16;2;3;4;57;8;9) \shdeqref{eq9j4}{eq9j65} \eq(16;257;3489)\wedge \eq(15678;23;4;9), \label{eq9j97}  \\
&\eq(15;26;38;4;7;9) \shdeqref{eq9j14}{eq9j63} \eq(12567;3489)\wedge \eq(1358;269;47), \label{eq9j100}  \\
&\eq(1256;37;4;8;9) \shdeqref{eq9j74}{eq9j81} \eq(123567;489)\wedge \eq(1256;378;4;9), \label{eq9j111}  \\
&\eq(158;2;3;4679) \shdeqref{eq9j1}{eq9j93} \eq(158;2;3;47;69)\vee \eq(15;2;3;4;6;79;8), \label{eq9j112}  \\
&\eq(125689;347) \shdeqref{eq9j1}{eq9j111} \eq(158;2;3;47;69)\vee \eq(1256;37;4;8;9), \label{eq9j116}  \\
&\eq(158;2679;3;4) \shdeqref{eq9j10}{eq9j95} \eq(158;2;3;4;69;7)\vee \eq(15;26;3;4;79;8), \label{eq9j121}  \\
&\eq(15;2;368;4;79) \shdeqref{eq9j12}{eq9j96} \eq(15;2;3;4;68;79)\vee \eq(1;2;38;4;5;6;7;9), \label{eq9j125}  \\
&\eq(15;2368;4;79) \shdeqref{eq9j12}{eq9j100} \eq(15;2;3;4;68;79)\vee \eq(15;26;38;4;7;9), \label{eq9j126}  \\
&\eq(12568;379;4) \shdeqref{eq9j12}{eq9j111} \eq(15;2;3;4;68;79)\vee \eq(1256;37;4;8;9), \label{eq9j127}  \\
&\eq(15679;2;3;4;8) \shdeqref{eq9j93}{eq9j97} \eq(15;2;3;4;6;79;8)\vee \eq(16;2;3;4;57;8;9), \label{eq9j163}  \\
&\eq(15;2;3;4;69;7;8) \shdeqref{eq9j1}{eq9j163} \eq(158;2;3;47;69)\wedge \eq(15679;2;3;4;8), \label{eq9j176}  \\
& \eq(1;23;4;5;6;7;8;9) \shdeqref{eq9j2}{eq9j126} \eq(1;23;4;56;78;9)\wedge \eq(15;2368;4;79), \label{eq9j177}  \\
& \eq(1;2;3;49;5;6;7;8) \shdeqref{eq9j4}{eq9j112} \eq(16;257;3489)\wedge \eq(158;2;3;4679), \label{eq9j184}  \\
&\eq(16;25;34;7;89) \shdeqref{eq9j4}{eq9j116} \eq(16;257;3489)\wedge \eq(125689;347), \label{eq9j187}  \\
& \eq(1;27;3;4;5;6;8;9) \shdeqref{eq9j4}{eq9j121} \eq(16;257;3489)\wedge \eq(158;2679;3;4), \label{eq9j189}  \\
&\eq(16;25;39;4;7;8) \shdeqref{eq9j4}{eq9j127} \eq(16;257;3489)\wedge \eq(12568;379;4), \label{eq9j191}  \\
&\eq(15;2;36;4;79;8) \shdeqref{eq9j83}{eq9j125} \eq(12356;4;789)\wedge \eq(15;2;368;4;79), \label{eq9j257}  \\
&\eq(158;2;34679) \shdeqref{eq9j1}{eq9j257} \eq(158;2;3;47;69)\vee \eq(15;2;36;4;79;8), \label{eq9j327}  \\
&\eq(135;26789;4) \shdeqref{eq9j3}{eq9j176} \eq(135;268;4;79)\vee \eq(15;2;3;4;69;7;8), \label{eq9j344}  \\
&\eq(16;235789;4) \shdeqref{eq9j15}{eq9j177} \eq(16;257;389;4)\vee \eq(1;23;4;5;6;7;8;9), \label{eq9j402}  \\
&\eq(16;2359;4;7;8) \shdeqref{eq9j177}{eq9j191} \eq(1;23;4;5;6;7;8;9)\vee \eq(16;25;39;4;7;8), \label{eq9j716}  \\
& \eq(1;2;3;4;58;6;7;9) \shdeqref{eq9j1}{eq9j402} \eq(158;2;3;47;69)\wedge \eq(16;235789;4), \label{eq9j947}  \\
& \eq(1;2;3;4;5;6;78;9) \shdeqref{eq9j2}{eq9j344} \eq(1;23;4;56;78;9)\wedge \eq(135;26789;4), \label{eq9j954}  \\
&\eq(1;29;35;4;6;7;8) \shdeqref{eq9j63}{eq9j716} \eq(1358;269;47)\wedge \eq(16;2359;4;7;8), \label{eq9j1269}  \\
& \eq(1;2;34;5;6;7;8;9) \shdeqref{eq9j187}{eq9j327} \eq(16;25;34;7;89)\wedge \eq(158;2;34679), \label{eq9j1655}  \\
&\eq(1;23569;4;78) \shdeqref{eq9j2}{eq9j1269} \eq(1;23;4;56;78;9)\vee \eq(1;29;35;4;6;7;8), \label{eq9j2485}  \\
& \eq(1;2;3;4;5;69;7;8) \shdeqref{eq9j1}{eq9j2485} \eq(158;2;3;47;69)\wedge \eq(1;23569;4;78). \label{eq9j5501} 
\end{align}
}
In particular, 
$\at(5,1)\in S$ by    \eqref{eq9j7},
$\at(1,6)\in S$ by    \eqref{eq9j20},
$\at(6.9)\in S$ by    \eqref{eq9j5501},
$\at(9,4)\in S$ by    \eqref{eq9j184},
$\at(4,3)\in S$ by    \eqref{eq9j1655},
$\at(3,2)\in S$ by    \eqref{eq9j177},
$\at(2,7)\in S$ by    \eqref{eq9j189},
$\at(7,8)\in S$ by    \eqref{eq9j954}, and
$\at(8,5)\in S$ by    \eqref{eq9j947}. Hence, Lemma \ref{lemma:circle} implies that $S=\Equ A$, as
required. The rest of the proof is trivial; hence, we omit it.
\end{proof}

\end{document}